\newtheorem{thm}{Theorem}[section]
\newtheorem{lemma}[thm]{Lemma}
\newtheorem{cor}[thm]{Corollary}
\theoremstyle{definition}
\newtheorem{rmk}[thm]{Remark}
\DeclareMathOperator{\R}{\mathbb R}
\DeclareMathOperator{\C}{\mathbb C}
\DeclareMathOperator{\K}{\mathbb K}
\DeclareMathOperator{\N}{\mathbb N}
\DeclareMathOperator{\alg}{alg}
\title{Artin approximation compatible with a change of variables}
\author{Goulwen Fichou, Ronan Quarez and Masahiro Shiota}
\address{IRMAR (UMR 6625), Universit\'e de Rennes 1, Campus de
  Beaulieu, 35042 Rennes Cedex, France}
  \address{IRMAR (UMR 6625), Universit\'e de Rennes 1, Campus de
  Beaulieu, 35042 Rennes Cedex, France}
\address{Graduate School of Mathematics, Nagoya University, Chikusa, Nagoya, 
464-8602, Japan}
\date\today
\subjclass[2010]{14P20, 58A07}
\begin{document}
\begin{abstract} We propose a version of the classical Artin approximation which allows to perturb the variables of the approximated solution. Namely, it is possible to approximate a formal solution of a Nash equation by a Nash solution in a compatible way with a given Nash change of variables. This results is closely related to the so-called nested Artin approximation and becomes false in the analytic setting. We provide local and global version of this approximation in real and complex geometry together with an application to the Right-Left equivalence of Nash maps.

\end{abstract}
\maketitle
Let $F(x,z)\in \K \{x,z\}$ be a convergent power series in two sets of variables $x=(x_1,\ldots,x_m)$ and $z=(z_1,\ldots,z_n)$ where $\K$ denotes the field of real numbers $\R$ or complex numbers $\C$. M. Artin \cite{A} proved in 1968 that any formal power series $\Phi \in \K[[x]]^n$ solution of $F(x,\Phi (x))=0$ can be approximated in the $\mathfrak m$-adic topology, where ${\mathfrak m}=(x)$, by convergent power series $\tilde \Phi \in \K\{x\}^n$ solution of $F(x,\tilde \Phi (x))=0$. The result remains valid replacing the analytic power series by formal power series that are algebraic over the polynomials, also called Nash power series and denoted by $\K_{\alg}[[x]]$.

There exists an important literature dealing with Artin approximation and its various generalisations (cf. \cite{R} for a recent survey). In this paper, we are interested in a version of Artin approximation involving a modification of the source variables of the function to approximate. Roughly speaking, given convergent power series $G\in \K \{x\}$ and $F\in \K\{x,y,z \}$, can one approximate formal solutions $\Phi \in \K[[x]]$ and $\Psi \in \K[[y]]$ of the equation $F(x,\Phi (x), \Psi \circ G(x))=0$ by convergent power series solutions ?

The answer to that question is negative in general, as it can be derive from Gabrielov counter-example to the nested version of Artin approximation \cite{G}. Nevertheless, we prove in Theorem \ref{thmlocal} that the answer is positive is $F$ and $G$ are of class Nash rather than only analytic. We propose moreover global versions of this approximation result in the real (Theorem \ref{thmglobalR}) and complex (Theorem \ref{thmglobalC}) algebraic setting.
A particular case of this modified version of Artin approximation has already been used by the third author, in the local case as well as in the global real case, to prove that the Right-Left analytic equivalence of real Nash maps implies their Right-Left Nash equivalence \cite{S3}. The approach proposed in the present paper enables to recover these results as direct and more conceptual consequences of Artin-type approximation.

The proof of this version of Artin approximation is based on nested approximation, which enables to approximate the solutions by keeping the dependence of the solutions in certain variables (cf. \cite{T} for example). We show in fact that the version presented in the paper is somehow equivalent to nested approximation. Note that the nested approximation, which is known to hold true in the Nash setting but not in the analytic one, is based on N\'eron desingularisation of regular ring morphisms \cite{Po,Sp}. For the global versions, we use in the real setting the work by Coste, Ruiz and Shiota \cite{CRS}, where the regular ring homomorphism considered is the inclusion morphism of Nash functions in analytic functions on a compact Nash manifold, whereas in the complex setting we used the work by Lempert \cite{L} who consider the regular ring homomorphism given by the inclusion of the constant functions in the analytic ones, in the case of compact polynomial polyhedra.

The paper is organised as follows. In the first section we deal with the simplest situation of the local case, showing that formal solutions of Nash equations can be approximated by Nash solutions. The main ingredient is nested approximation. In the second section, we tackle the global version, whose proof need some additional ingredients to achieve the approximation of analytic solutions of Nash equations by Nash solutions.

\section{Local case}
Let $\K$ denote a field.
Denote by $\K[[x_1,\ldots,x_m]]$ and $\K_{\alg}[[x_1,\ldots,x_m]]$ respectively the rings of formal powers series and algebraic power series (over the polynomials $\K[x_1,\ldots,x_m]$), endowed with the $\mathfrak m$-adic topology where $\mathfrak m$ denotes the maximal ideals generated by $(x_1,\ldots,x_m)$.

\begin{thm}\label{thmlocal} Let $l,m,n,p,q\in \N$ and $x=(x_1,\ldots,x_m)$, $y=(y_1,\ldots,y_n)$, $z=(z_1,\ldots,z_p)$ and $t=(t_1,\ldots,t_l)$. Let $F\in \K_{\alg}[[x,y,z]]^q$ and $G\in \K_{\alg}[[x]]^l$ be such that $F(0)=0$ and $G(0)=0$. 

Assume that there exists $\Phi\in \K[[x]]^n$ and $\Psi\in \K[[t]]^p$ such that $\Phi(0)=\Psi(0)=0$ and $F(x,\Phi(x), \Psi\circ G(x))=0$. Then, we can approximate $\Phi$ by $\tilde \Phi \in \K_{\alg}[[x]]^n$ and $\Psi$ by $\tilde \Psi \in \K_{\alg}[[t]]^p$ such that $F(x,\tilde \Phi(x),\tilde \Psi \circ G(x))=0$.
\end{thm}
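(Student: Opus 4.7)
The plan is to recast the problem as a nested Artin approximation question, after rewriting the equation as a divisibility condition in the joint ring $\K_{\alg}[[x,t]]$.

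First, I would observe that in $\K[[x,t]]^q$, each component of $F(x,\Phi(x),\Psi(t))$ vanishes when $t$ is replaced by $G(x)$, and hence lies in the ideal $(t_1-G_1(x),\ldots,t_l-G_l(x))$. Since $t_1 - G_1(x),\ldots,t_l - G_l(x)$ form a regular sequence in $\K[[x,t]]$, this is equivalent to the existence of series $H_{ij}(x,t)\in\K[[x,t]]$, for $i=1,\ldots,q$ and $j=1,\ldots,l$, such that
\[
F_i(x,\Phi(x),\Psi(t)) \ =\ \sum_{j=1}^{l} H_{ij}(x,t)\,(t_j-G_j(x)), \qquad i=1,\ldots,q.
\]
Conversely, any triple $(\Phi,\Psi,H)$ satisfying this identity yields a solution of the original equation by specialising $t=G(x)$.

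Next, I would regard this identity as a single Nash system in the unknowns $(\Phi,\Psi,H)$ over the ring $\K_{\alg}[[x,t]]$, subject to the separation constraint that $\Phi$ depends only on the variables $x$, $\Psi$ only on the variables $t$, and $H$ on both. The given pair $(\Phi,\Psi)$, together with any choice of $H$ realising the division above, provides a formal solution of this system with the prescribed dependencies. Applying the nested Artin approximation theorem in the Nash category, one obtains Nash approximants $\tilde\Phi\in\K_{\alg}[[x]]^n$, $\tilde\Psi\in\K_{\alg}[[t]]^p$ and $\tilde H\in\K_{\alg}[[x,t]]^{ql}$ satisfying the same identity, and substituting $t=G(x)$ completes the argument.

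The principal difficulty lies in the precise form of nested approximation required: the dependence subsets $\{x\}$ and $\{t\}$ of the unknowns $\Phi$ and $\Psi$ are incomparable under inclusion, so one needs the variant of nested approximation with separated (rather than linearly nested) variables, rather than the simplest prefix-nested formulation. This variant is still available in the Nash setting thanks to N\'eron--Popescu desingularisation applied to the regular morphism $\K_{\alg}[[x,t]]\to\K[[x,t]]$, whereas no analogous tool exists for convergent power series; this is consistent with the paper's observation that the analytic version of the statement fails, and that the present theorem is essentially equivalent to nested Artin approximation.
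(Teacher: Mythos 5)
Your reduction of the equation to the division identity
\[
F_i(x,\Phi(x),\Psi(t))=\sum_{j=1}^{l}H_{ij}(x,t)\,(t_j-G_j(x))
\]
is exactly the paper's first step, and the final substitution $t=G(x)$ is also how the paper concludes. The gap is in the approximation step itself. You correctly note that the dependence sets $\{x\}$ and $\{t\}$ of the unknowns are incomparable, but you then invoke a ``nested approximation with separated variables'' in the Nash category. No such theorem follows from N\'eron--Popescu desingularisation or from the nested approximation of \cite{T}: the available nested theorem requires the dependence sets to be totally ordered by inclusion, and the separated variant you appeal to is false in general. For instance (say over $\K=\R$), with one variable $x$, one variable $t$, and unknowns $u$ (constrained to depend on $x$ only), $v$ ($t$ only), $w$ ($(x,t)$), the single polynomial equation
$$v-u-(t-x)(u+1)-(t-x)^2\,w=0$$
admits the formal (even convergent) solution $u=e^x-1$, $v=e^t-1$, $w=\bigl(e^t-e^x-(t-x)e^x\bigr)/(t-x)^2$; but substituting $t=x+h$ and extracting the coefficients of $h^0$ and $h^1$ shows that any solution respecting these constraints satisfies $v=u$ and $u'=u+1$, hence $u=ce^x-1$, which is an algebraic series only for $c=0$, so no algebraic solution with separated constraints approximates the formal one beyond order $1$. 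Thus the key step of your argument rests on a false general principle, and proving the separated statement for your particular system (with the approximant of $\Phi$ depending on $x$ only) is not easier than the theorem you are trying to prove.

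The paper's proof sidesteps precisely this point: it does not require the approximant of $\Phi$ to depend only on $x$. One applies the standard, linearly nested approximation with the ordered blocks $t\subset(t,x)$, so that $\tilde\Psi$ depends on $t$ alone while the approximant $\tilde\Phi(x,t)$ of $\Phi$ and the division coefficients $\tilde\xi_{k,j}(x,t)$ are allowed to depend on all variables; the approximated identity then yields $F(x,\tilde\Phi(x,G(x)),\tilde\Psi(G(x)))=0$ upon substituting $t=G(x)$, and $x\mapsto\tilde\Phi(x,G(x))$ is again a tuple of algebraic power series close to $\Phi$ in the $\mathfrak m$-adic topology. If you modify your argument accordingly --- drop the constraint that the $U$-unknowns depend only on $x$ and recover the $x$-dependence at the end by composing with $(x,G(x))$ --- it becomes the paper's proof.
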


Before proving this result, let us show an immediate application to the so-called Right-Left equivalence relation of Nash function maps, in the case $\K=\R$. We say that the germs at the origin of the Nash maps $f:\R^n\rightarrow \R^m$ and $g:\R^n\rightarrow \R^m$ are formally Right-Left (resp. Nash Right-Left) equivalent if there exist some smooth (resp. Nash) 
diffeomorphisms $\phi:\R^n\rightarrow\R^n$ and $\psi:\R^m\rightarrow\R^m$,
such that the Taylor expansions of $g\circ \phi$ and $\psi\circ f$ at the origin coincide (resp. $g\circ \phi=\psi\circ f$). One recover the following (originally proved in \cite{S3}):
\begin{cor}
Let $f:\R^n\rightarrow \R^m$ and $g:\R^n\rightarrow \R^m$ be two Nash germs at the origin. Then, $f$ and $g$ are 
formally Right-Left equivalent if and only if they are Nash Right-Left equivalent.
\end{cor}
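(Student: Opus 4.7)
The $(\Leftarrow)$ direction is immediate: a Nash Right-Left equivalence a fortiori induces a formal one at the level of Taylor series. For the converse, the plan is to repackage the formal equivalence as a single Nash equation of the shape handled by Theorem \ref{thmlocal}, and then check that the approximating solutions remain diffeomorphisms.

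After composing with translations if necessary, I may assume $f(0)=g(0)=0$ and that the smooth diffeomorphisms $\phi,\psi$ realising the formal equivalence fix the origin. Write $\Phi \in \R[[x]]^n$ and $\Psi \in \R[[y]]^m$ for their Taylor expansions at $0$; since $\phi$ and $\psi$ are diffeomorphisms, the Jacobians of $\Phi$ and $\Psi$ at $0$ are invertible. The hypothesis that $g\circ\phi$ and $\psi\circ f$ have the same Taylor series then rewrites as the single formal identity
$$g(\Phi(x)) - \Psi(f(x)) = 0 \quad \text{in } \R[[x]]^m.$$

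I then invoke Theorem \ref{thmlocal} with $F(x,y,z) := g(y) - z$, viewed as an element of $\R_{\alg}[[x,y,z]]^m$ with $y \in \R^n$ and $z \in \R^m$, and $G(x) := f(x) \in \R_{\alg}[[x]]^m$, both vanishing at the origin (so $l = p = q = m$). The theorem produces Nash power series $\tilde\Phi$ and $\tilde\Psi$, arbitrarily close to $\Phi$ and $\Psi$ in the $\mathfrak m$-adic topology, that still satisfy $g(\tilde\Phi(x)) = \tilde\Psi(f(x))$. Choosing the approximation to agree with $\Phi$ and $\Psi$ modulo $\mathfrak m^2$ forces the linear parts of $\tilde\Phi$ and $\tilde\Psi$ at the origin to coincide with those of $\Phi$ and $\Psi$ and hence to be invertible, so $\tilde\Phi$ and $\tilde\Psi$ define Nash diffeomorphism germs at the origin. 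This gives the desired Nash Right-Left equivalence.

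The substantive content is entirely absorbed into Theorem \ref{thmlocal}; the only remaining care is in identifying the correct $F$ and $G$ and in observing that the $\mathfrak m$-adic approximation automatically preserves the invertibility of the Jacobians at the origin once the order of approximation is at least two.
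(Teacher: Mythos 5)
Your proposal is correct and follows essentially the same route as the paper: take $F(x,y,z)=g(y)-z$ and $G=f$, apply Theorem \ref{thmlocal}, and observe that sufficiently close approximations of $\phi$ and $\psi$ remain diffeomorphism germs. Your extra remarks (normalising so that $f(0)=g(0)=0$ and matching the solutions modulo $\mathfrak m^2$ to keep the linear parts invertible) simply make explicit what the paper leaves as ``(close enough) approximation''.
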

\begin{proof}
Assume $f$ and $g$ are formally Right-Left equivalent, so that there exist smooth diffeomorphisms $\phi$ and $\psi$ such that $g\circ \phi=\psi\circ f$ at the level of Taylor expansions. Denoting $F(y,z)=g(y)-z$, we obtain that the Taylor expansions of $\phi$ and $\psi$ are formal solutions of
$$F(\phi(x),\psi\circ f(x))=0.$$
Using Theorem \ref{thmlocal}, it is possible to approximate $\phi$ and $\psi$ by  Nash map germs $\tilde\phi$ and
$\tilde\psi$, which remain diffeomorphisms by (close enough) approximation, such that $$F(\tilde\phi(x),\tilde\psi\circ f(x))=0.$$ Hence $g\circ \tilde\phi=\tilde\psi\circ f$, so that $f$ and $g$ are Nash Right-Left equivalent. The converse implication is immediate.
\end{proof}

\begin{rmk} The result is no longer true if we replace algebraic power series with analytic power series. A counter-example in the real case is exhibited in \cite{S3}, based on a famous example of Osgood and Gabrielov \cite{G}. More precisely, there exist two germs of analytic maps $f,g:(\R^2,0) \to (\R^4,0)$ which are Left equivalent as smooth germs, but not Left equivalent as analytic germs. 
\end{rmk}

Let us come back now to the proof of Theorem \ref{thmlocal} which is a consequence of the nested version of Artin approximation in the local case (see \cite{T}). 

\begin{proof}[Proof of Theorem \ref{thmlocal}] Write $F=(F_1,\ldots,F_q)$. We translate the equation 
$$F(x,\Phi(x), \Psi\circ G(x))=0$$
into the conditions that 
$$
\left\{\begin{array}{lcl}
\forall k\in \{1,\ldots, q\}&& F_k(x,\Phi(x), \Psi(t))=0\\
\forall j \in \{1,\ldots,l \}&&t_j=G_j(x).
\end{array}\right.
$$
Since $G(0)=0$, the set 
$$\{x_1,\ldots,x_m, t_1-G_1(x),\ldots,t_l-G_l(x)\}$$ 
may serve as a new set of variables for $\K[[x,t]]$. In particular, considering the $F_k$'s as formal power series, we obtain, for any $k\in \{1,\ldots, p\}$, a unique decomposition
$$F_k(x,\Phi(x),\Psi(t))=\sum_{(a,b)\in (\N\cup \{0\})^m \times (\N\cup \{0\})^l} c_{a,b}x^a(t-G(x))^b$$
with $c_{a,b}\in \K$, $x^a=\prod_{i=1}^m x_i^{a_i}$ and $(t-G(x))^b=\prod_{j=1}^l (t_j-G_j(x))^{b_j}$. Moreover the coefficients $c_{a,0}$ must vanish since $\Phi$ and $\Psi$ are solutions of
$$F(x,\Phi(x), \Psi\circ G(x))=0.$$
So we can rewrite these decompositions of the $F_k$'s as finite sums
$$F_k(x,\Phi(x), \Psi(t))=\sum_{j=1}^l \xi_{k,j}(x,t)(t_j-G_j(x)) $$
for some $\xi_{k,j}\in \K[[x,t]]$. Consider the system of equations in $\K_{\alg}[[x,t]]$ with variables 
$$\{U_i,V_r,W_{k,j} \}_{(i,j,k,r)\in \{1,\ldots,n\}\times \{1,\ldots,l\}\times \{1,\ldots,q\}\times \{1,\ldots,p\}}$$
defined by:

$$
\left\{
\begin{array}{l}
F_1(x,U_1,\ldots,U_n,V_1,\ldots,V_p)-\sum_{j=1}^l W_{1,j}(t_j-G_j(x)) =0\\
\quad \quad \quad \quad \quad \quad \quad \quad \vdots\\
F_q(x,U_1,\ldots,U_n,V_1,\ldots,V_p)-\sum_{j=1}^l W_{q,j}(t_j-G_j(x)) =0.
\end{array}
\right.
$$
This system of equations in $\K_{\alg}[[x,t]]$ admits 
$$
\left\{
\begin{array}{l}
(U_1,\ldots,U_n)=\Phi(x)\\
(V_1,\ldots,V_p)=\Psi(t)\\
W_{k,j}=\xi_{k,j}(x,t),~~(j,k)\in  \{1,\ldots,l\}\times \{1,\ldots,q\}
\end{array}
\right.
$$
as a solution in $\K[[x,t]]$.
 By nested approximation theorem (\cite{T}), we can approximate this solution by a solution
 $$
\left\{
\begin{array}{l}
(U_1,\ldots,U_n)=\tilde \Phi(x,t)\\
(V_1,\ldots,V_p)=\tilde \Psi(t)\\
W_{k,j}=\tilde \xi_{k,j}(x,t),~~(j,k)\in  \{1,\ldots,l\}\times \{1,\ldots,q\}
\end{array}
\right.
$$
in $\K_{\alg}[[x,t]]$. As a consequence
$$F_k(x,\tilde \Phi(x,t),\tilde \Psi(t))=\sum_{j=1}^l \tilde \xi_{k,j}(x,t)(t_j-G_j(x)) $$
for any $k\in \{1,\ldots,q\}$, and therefore 
$$F(x,\tilde \Phi(x,G(x)),\tilde \Psi (G(x)))=0.$$
It remains to say that $\tilde {\tilde {\Phi}}(x)=\tilde \Phi(x,G(x))$ approximate $\tilde\Phi(x)$ in 
$\K_{\alg}[[x]]^n$ since  $\tilde \Phi(x,t)$ approximate $\tilde\Phi(x)$ in 
$\K_{\alg}[[x,t]]^n$, and therefore $\tilde {\tilde {\Phi}}$ and $\tilde \Psi$ give the desired solutions.
\end{proof}

\begin{rmk}\label{ChangeAndNested}
If the nested Artin approximation is the key tool for proving Theorem \ref{thmlocal}, conversely one may also note that the nested Artin approximation for two sets of variables $x_1$ and $x_2$ could be seen as a consequence of Theorem \ref{thmlocal}. 
Indeed, let $F\in \K_{\alg}[[x_1,x_2,y,z]]^q$ be such that $F(0)=0$. Assume that there exists $\Psi\in \K[[x_1]]$ and
$\Phi\in \K[[x_1,x_2]]$ such that $\Phi(0)=\Psi(0)=0$ and $$F(x_1,x_2,\Psi(x_1), \Phi(x_1,x_2))=0.$$ 
This last equation is equivalent to 
$$F(x_1,x_2,\Psi_2\circ G(x_1,x_2), \Phi(x_1,x_2))=0.$$
 where $G(x_1,x_2)=(x_1,0)$ and $\Psi_2$ is just $\Psi$ viewed in $\K[[x_1,x_2]]$. 
By Theorem \ref{thmlocal}, one can approximate $\Psi_2\in \K[[x_1,x_2]]$ and
$\Phi\in \K[[x_1,x_2]]$ respectively by $\tilde\Psi_2\in \K_{\alg}[[x_1,x_2]]$ and
$\tilde\Phi\in \K_{\alg}[[x_1,x_2]]$ such that 
$$F(x_1,x_2,\tilde\Psi_2\circ G(x_1,x_2), \tilde\Phi(x_1,x_2)))=0,$$
or in other words
$$F(x_1,x_2,\tilde\Psi_2(x_1,0), \tilde\Phi(x_1,x_2)))=0.$$
Since $\tilde\Psi_2(x_1,x_2)$ approximate $\Psi(x_1)$ in $\K[[x_1,x_2]]$, then 
$\tilde\Psi_2(x_1,0)$ approximate also $\Psi(x_1)$.
\par
In other words, nested Artin approximation for a set of two variables and Artin approximation compatible with a change on a set of two variables are closely related notions. 
\end{rmk}

One may readily generalize Theorem \ref{thmlocal} to a change on several sets of variables :
\begin{thm}\label{genthmlocal} Let $m,m_i,l_i,q\in \N$ and consider some sets of variables $x=(x_1,\ldots,x_m)$, $y_i=(y_{i,1},\ldots,y_{i,m_i})$, for $i\in\{0,\ldots,n\}$. Let $F\in \K_{\alg}[[x,y_0,\ldots,y_n]]^q$ and let $G_i\in \K_{\alg}[[x]]^{l_i}$ be such that $F(0)=0$ and $G_i(0)=0$ for $i\in\{0,\ldots,n\}$.

Assume there exist some $n_i$-tuples of formal power series $\Psi_i$ such that, for each $i\in\{0,\ldots,n\}$, one has $\Psi_i(0)=0$ and $$F(x,\Psi_0(x), \Psi_1(G_1(x)), \Psi_2(G_1(x),G_2(x)),\ldots, \Psi_n(G_1(x),\ldots,G_n(x)))=0.$$ Then, one can approximate all the $\Psi_i$'s by algebraic $n_i$-tuples of power series $\tilde \Psi_i$
such that $$F(x,\tilde\Psi_0(x), \tilde\Psi_1(G_1(x)), \tilde\Psi_2(G_1(x),G_2(x)),\ldots, \tilde\Psi_n(G_1(x),\ldots,G_n(x)))=0.$$
\end{thm}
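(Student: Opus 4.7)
The plan is to adapt the proof of Theorem~\ref{thmlocal} by introducing a tower of auxiliary sets of variables, so that the problem reduces to the multi-block version of nested Artin approximation. For each $i\in\{1,\ldots,n\}$, introduce a new block of variables $t_i=(t_{i,1},\ldots,t_{i,l_i})$, and write $T=(t_1,\ldots,t_n)$ and $T_i=(t_1,\ldots,t_i)$. Since each $G_i(0)=0$, the family $\{x_j\}\cup\{t_{i,k}-G_{i,k}(x)\}$ is a regular system of parameters for $\K[[x,T]]$. The hypothesis that $F(x,\Psi_0(x),\Psi_1(G_1(x)),\ldots)=0$ expresses the vanishing of each $F_k(x,\Psi_0(x),\Psi_1(t_1),\ldots,\Psi_n(T_n))$ modulo the ideal generated by the $t_{i,j}-G_{i,j}(x)$, so I can expand
\[
F_k\bigl(x,\Psi_0(x),\Psi_1(t_1),\ldots,\Psi_n(T_n)\bigr)=\sum_{i=1}^{n}\sum_{j=1}^{l_i}\xi_{k,i,j}(x,T)\,(t_{i,j}-G_{i,j}(x))
\]
for some $\xi_{k,i,j}\in\K[[x,T]]$.

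Next, I introduce unknowns $U$ (an $m_0$-tuple), $V_i$ (an $m_i$-tuple for $i=1,\ldots,n$), and $W_{k,i,j}$, and consider the system of Nash equations in $\K_{\alg}[[x,T]]$
\[
F_k(x,U,V_1,\ldots,V_n)-\sum_{i,j}W_{k,i,j}(t_{i,j}-G_{i,j}(x))=0,\quad k=1,\ldots,q,
\]
which admits the formal solution $U=\Psi_0(x)$, $V_i=\Psi_i(T_i)$, $W_{k,i,j}=\xi_{k,i,j}(x,T)$. The crucial observation is that this solution has a nested dependence compatible with the ordering of variables $(t_1,t_2,\ldots,t_n,x)$: each $V_i$ depends only on the prefix $T_i$, while $U$ and $W_{k,i,j}$ are allowed to depend on all the variables. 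This is precisely the configuration handled by the multi-block nested Artin approximation in the local case (\cite{T}), which produces a Nash solution $\tilde U,\tilde V_i,\tilde W_{k,i,j}$ preserving these prefixes, so in particular $\tilde V_i\in\K_{\alg}[[T_i]]^{m_i}$.

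Substituting $t_i=G_i(x)$ kills the right-hand side of the Nash system and leaves
\[
F\bigl(x,\tilde U(x,G_1(x),\ldots,G_n(x)),\tilde V_1(G_1(x)),\ldots,\tilde V_n(G_1(x),\ldots,G_n(x))\bigr)=0,
\]
so setting $\tilde\Psi_0(x)=\tilde U(x,G_1(x),\ldots,G_n(x))$ and $\tilde\Psi_i=\tilde V_i$ yields the desired approximations, exactly as in the closing step of Theorem~\ref{thmlocal}: the approximation of $\Psi_0(x)$ in $\K_{\alg}[[x,T]]^{m_0}$ is preserved after the substitution $t_i=G_i(x)$. The only real technical point is to confirm that nested approximation accommodates several blocks of variables with several layers of nested dependence simultaneously, but this is classical, so the argument is essentially a bookkeeping generalization of the single-change-of-variables case and I do not expect a genuine obstacle.
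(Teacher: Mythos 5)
Your argument is correct and is exactly the ``readily generalized'' proof the paper has in mind: you repeat the proof of Theorem~\ref{thmlocal} with one auxiliary block $t_i$ per map $G_i$, expand $F_k$ in the coordinates $x,\;t_{i,j}-G_{i,j}(x)$ to get the $\xi_{k,i,j}$, and apply the local nested approximation of \cite{T} to the block ordering $t_1,\ldots,t_n,x$ so that each $\tilde V_i$ depends only on $T_i$, before substituting $t_i=G_i(x)$. The multi-block nested statement you invoke is indeed the standard form of nested Artin approximation for algebraic power series, so there is no gap.
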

As it has already been noted in Remark \ref{ChangeAndNested}, this result is equivalent to nested Artin approximation.

\section{Global case}
The global versions of Artin approximation we deal with in this paper, are very similar in the real and complex setting. We give more details for the real case, with which the reader may be less familiar. The common idea is to translate the modification in the source variables of the solution in terms of inclusion of Nash sets. Then we use Cartan Theorem B in order to describe this inclusion by global equations, increasing the number of variables. Then we apply nested approximation to obtain the desired solution, in the same way as in the local case.
\subsection{Real case}
The interest for Nash functions and Nash manifolds in real geometry comes from the original work of Nash \cite{N}, whose purpose was to equip any analytic manifold with a real algebraic structure. The use of Nash functions is now classical, in an intermediate step between polynomials and analytic functions (cf. \cite{S1}). Concerning more specifically Artin approximation, the first global version has been used in \cite{CRS} in order to solve difficult questions on Nash functions, 
whereas its nested version has  been used in \cite{FS} in order to approximate an analytic resolution of the singularities of a Nash function by a Nash resolution. \par
In all the following, the approximations are considered with respect to the $C^\infty$ topology.\par
For the convenience of the reader, we recall Proposition 3.1 in \cite{FS} (in the particular case where $M_i=X_i$ are compact Nash manifolds, for $i=1,\ldots,m$) that will be the key to prove the main result of this section and which can be seen as  a global counterpart of the nested approximation theorem. For a Nash manifold $M$, we denote by $\mathcal O$, respectively $\mathcal M$, its sheaf of analytic, respectively Nash, function germs.

\begin{thm}\label{NestedGlobal} Let $M_1,...,M_m$ be compact Nash manifolds and 
$l_1,...,l_m, n_1,...,n_m\in\N$. 
Let $F_i\in \mathcal N(M_1\times\cdots\times M_i\times\R^{l_1}\times\cdots\times\R^{l_i})^{n_i}$ and $f_i\in\mathcal O(M_1
\times\cdots\times M_i)^{l_i}$, for $i=1,...,m$, be such that 
$$F_i(x_1,...,x_i,f_1(x_1),...,f_i(x_1,...,x_i))=0,$$ as 
elements of $\mathcal O(M_1\times\cdots\times M_i)^{n_i}$.
Then there exist $\tilde f_i\in \mathcal N(M_1\times\cdots\times M_i)^{l_i}$, for $i=1,...,m$, close to $f_i$ in the $C^\infty$ topology, such that 
$$F_i(x_1,...,x_i,\tilde f_1(x_1),...,\tilde f_i(x_1,...,x_i))=0$$ 
in $\mathcal N(M_1\times
\cdots\times M_i)^{n_i}$. 
\end{thm}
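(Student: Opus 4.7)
My plan is to proceed by induction on $m$. The base case $m=1$ is the global Artin approximation of Coste, Ruiz and Shiota \cite{CRS} on compact Nash manifolds: since the inclusion $\mathcal N(M_1)\hookrightarrow\mathcal O(M_1)$ is a regular ring morphism, every analytic solution of a Nash system on $M_1\times\R^{l_1}$ is $C^\infty$-approximable by a Nash solution, and this directly produces the required $\tilde f_1$.

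For the inductive step, the naive plan of first approximating $f_1,\ldots,f_{m-1}$ and then $f_m$ fails, since replacing $f_j$ by $\tilde f_j$ for $j<m$ perturbs the last equation $F_m=0$ and destroys the exact solution $f_m$. Instead, I would treat all $m$ equations simultaneously on the full product $X_m:=M_1\times\cdots\times M_m$, viewing each $f_i$ as an analytic function on $X_m$ by pull-back along the projection $X_m\to X_i$. The collection $(f_1,\ldots,f_m)$ then forms a single analytic solution of a Nash system on $X_m\times\R^{l_1+\cdots+l_m}$, and CRS on the compact Nash manifold $X_m$ yields Nash approximations $\tilde f_i\in\mathcal N(X_m)^{l_i}$, close to $f_i$ in the $C^\infty$ topology and jointly satisfying the full system.

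The remaining and most delicate point is to enforce that each $\tilde f_i$ depends only on $(x_1,\ldots,x_i)$. For this I would appeal structurally to Popescu's theorem \cite{Po,Sp}, which writes the regular morphism $\mathcal N(X_m)\hookrightarrow\mathcal O(X_m)$ as a filtered colimit of smooth $\mathcal N(X_m)$-algebras through which the analytic solution factors. The key input is that, because $f_i$ depends only on the first $i$ factors of $X_m$, its factorization takes place in a smooth algebra over $\mathcal N(X_i)$, giving a nested tower of smooth algebras compatible with the chain $\mathcal N(X_1)\subset\cdots\subset\mathcal N(X_m)$. Smoothness then allows lifting the analytic sections to Nash sections level by level: first $\tilde f_1\in\mathcal N(X_1)^{l_1}$, then $\tilde f_2\in\mathcal N(X_2)^{l_2}$ compatibly with $\tilde f_1$, and so on up to $\tilde f_m$. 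The main obstacle, and where essentially all the technical work lives, is securing this nested compatibility of Popescu's desingularization---the global counterpart of Teissier's local nested Artin approximation \cite{T}---resting on the regularity of each inclusion $\mathcal N(X_i)\hookrightarrow\mathcal O(X_i)$ provided by \cite{CRS}.
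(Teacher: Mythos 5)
First, a point of comparison: the paper does not prove this statement at all --- it is recalled verbatim from Proposition 3.1 of \cite{FS} (specialised to compact Nash manifolds), so your argument has to stand entirely on its own merits, and as written it does not. Your second paragraph (applying \cite{CRS} to the whole product $X_m=M_1\times\cdots\times M_m$) only produces approximations depending on all the variables and is then abandoned, so everything rests on the third paragraph; there, the assertion that the Popescu factorization for $\mathcal N(X_m)\hookrightarrow\mathcal O(X_m)$ can be arranged into ``a nested tower of smooth algebras compatible with the chain $\mathcal N(X_1)\subset\cdots\subset\mathcal N(X_m)$'', through which the sections can then be lifted to Nash sections level by level, is precisely the nested approximation theorem you are trying to prove. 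Popescu/N\'eron desingularisation gives you, for each $i$ separately, some smooth $\mathcal N(X_i)$-algebra through which $(f_1,\ldots,f_i)$ factors, but nothing forces these factorizations to be compatible, and nothing guarantees that a Nash point chosen at stage $i$ extends to a Nash point at stage $i+1$ (for that you would need smoothness of the transition algebras over the previous stage, which Popescu does not provide). You acknowledge this yourself (``where essentially all the technical work lives''), which means the core of the proof is missing rather than merely compressed.

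That this gap cannot be closed by formal arguments from regularity alone is shown by the analytic category: each inclusion $\C\{x_1,\ldots,x_i\}\hookrightarrow\C[[x_1,\ldots,x_i]]$ is regular and Artin approximation holds there, yet nested approximation fails by Gabrielov's example \cite{G} --- a point the paper itself emphasizes. So whatever completes your outline must use something specific to the Nash setting beyond the regularity of the vertical inclusions $\mathcal N(X_i)\hookrightarrow\mathcal O(X_i)$; this is exactly the content of \cite{FS}, Proposition 3.1 (built on \cite{CRS}, \cite{Po}, \cite{Sp}, and the local nested theory as in \cite{T}), which the present paper simply invokes. Your base case $m=1$ via \cite{CRS} is correct, and your identification of the relevant ingredients is accurate, but the proposal is a road map rather than a proof.
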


Next result can be seen as an Artin approximation compatible with a change of variables, or in other words, as a global  
counterpart in the real setting of Theorem \ref{thmlocal}.

\begin{thm}\label{thmglobalR} Let $M, N, L$ and $P$ be compact Nash manifolds, $q\in N$ be an integer and $F:M\times N \times P \to \R^q$ and $G:M \to L$ be Nash maps.

Assume there exist analytic maps $\Phi: M\to N$ and $\Psi:L\to P$ such that $F(x,\Phi (x), \Psi \circ G(x))=0$ for any $x\in M$. Then there exist Nash maps $\tilde \Phi :M\to N$ and $\tilde \Psi:L \to P$, close to $\Phi$ and $\Psi$, such that $F(x,\tilde \Phi (x), \tilde \Psi \circ G(x))=0$ for any $x\in M$.
\end{thm}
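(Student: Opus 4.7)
The plan is to transpose the argument for the local case, Theorem \ref{thmlocal}, to the global setting, with the global nested approximation Theorem \ref{NestedGlobal} playing the role of the local nested Artin approximation. First I will set up ambient Euclidean spaces by fixing Nash embeddings $N\subset \R^{n'}$, $P\subset \R^{p'}$ together with Nash retractions $\rho_N:U_N\to N$, $\rho_P:U_P\to P$ from open tubular neighborhoods. Replacing $F$ by its Nash extension $\widetilde F(x,y,z):=F(x,\rho_N(y),\rho_P(z))$ on $M\times U_N\times U_P$, the problem reduces to producing Nash approximations $\tilde\Phi':M\to\R^{n'}$, $\tilde\Psi':L\to\R^{p'}$ of $\Phi,\Psi$ satisfying $\widetilde F(x,\tilde\Phi'(x),\tilde\Psi'\circ G(x))=0$; composing with $\rho_N,\rho_P$ then yields the required Nash maps valued in $N$ and $P$, provided the approximations are close enough for their images to remain inside $U_N$ and $U_P$.

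The next step is to move the equation from $M$ to the compact Nash manifold $M\times L$. Define the analytic map $H:M\times L\to\R^q$ by $H(x,t):=\widetilde F(x,\Phi(x),\Psi(t))$; by hypothesis $H$ vanishes on the graph $\Gamma_G=\{(x,G(x)):x\in M\}$, which is a compact Nash submanifold of $M\times L$. Appealing to the Cartan-type theorems available for coherent sheaves on a compact Nash manifold (as exploited in \cite{CRS}), the ideal $\mathcal I_{\Gamma_G}\subset\mathcal N(M\times L)$ is generated by finitely many Nash functions $h_1,\dots,h_s$, and the faithful flatness of $\mathcal O(M\times L)$ over $\mathcal N(M\times L)$ combined with Cartan's Theorem B produces a global decomposition
\begin{equation*}
H_k(x,t)=\sum_{j=1}^{s}\xi_{k,j}(x,t)\,h_j(x,t),\qquad k=1,\dots,q,
\end{equation*}
for suitable analytic functions $\xi_{k,j}\in\mathcal O(M\times L)$.

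Once this decomposition is in hand, the idea is to apply Theorem \ref{NestedGlobal} with $M_1=L$ and $M_2=M$ to the system of Nash equations
\begin{equation*}
\widetilde F_k(x,U(t,x),V(t))-\sum_{j=1}^{s} W_{k,j}(t,x)\,h_j(x,t)=0,\qquad k=1,\dots,q,
\end{equation*}
in the unknowns $V:L\to\R^{p'}$ and $U:L\times M\to\R^{n'}$, $W_{k,j}:L\times M\to\R$. The assignment $V=\Psi$, $U(t,x)=\Phi(x)$, $W_{k,j}=\xi_{k,j}$, in which $\Phi$ is viewed as a function of $(t,x)$ constant in $t$, is an analytic solution, so Theorem \ref{NestedGlobal} supplies $C^\infty$-close Nash solutions $(\tilde V,\tilde U,\tilde W_{k,j})$. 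Specialising to $t=G(x)$ annihilates every $h_j(x,G(x))$, whence $\widetilde F(x,\tilde U(G(x),x),\tilde V(G(x)))=0$; the maps $\tilde\Phi'(x):=\tilde U(G(x),x)$ and $\tilde\Psi':=\tilde V$ are then Nash approximations of $\Phi$ and $\Psi$, and composing with $\rho_N$ and $\rho_P$ yields the Nash maps required by the statement. The technically delicate step I expect is the Cartan-type decomposition of $H$ along $\Gamma_G$ by Nash generators carried out in the second paragraph: it is what converts the source change $G$ into a plain equation amenable to Theorem \ref{NestedGlobal}, while the surrounding argument is a direct global transposition of the proof of Theorem \ref{thmlocal}.
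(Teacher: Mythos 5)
Your argument is essentially the paper's proof: you pass to the graph of $G$ in $M\times L$, use the global Nash generators of its ideal (from \cite{CRS}) together with the faithful-flatness argument of Lemma \ref{lem1} and Cartan's Theorem B to write $F_k(x,\Phi(x),\Psi(t))=\sum_{j}\xi_{k,j}h_j$ with analytic coefficients, and then apply Theorem \ref{NestedGlobal} with the nesting $(t,x)$ before specialising at $t=G(x)$, exactly as the paper does. The only (minor and harmless) deviation is how the constraint that the approximations stay valued in $N$ and $P$ is handled: you compose with Nash tubular retractions $\rho_N,\rho_P$ after extending $F$, whereas the paper adjoins the global Nash equations $h_N(U)=0$ and $h_P(V)=0$ of the target manifolds to the system; both devices are standard and correct.
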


Before entering into the details of the proof, we state an classical auxiliary lemma.

\begin{lemma}\label{lem1} Let $M$ be a Nash manifold, let $X\subset M$ be a Nash subset and take $x\in X$. Denote by $I\subset \mathcal N_{M,x}$ the ideal of Nash germs whose complexification vanishes on the complexification $X_{\mathbb C}$ of $X$ in a neighbourhood of $x$. Then $I\mathcal O_{M,x}$ is equal to the ideal of analytic germs whose complexification vanishes on $X_{\mathbb C}$ in a neighbourhood of $x$.
\end{lemma}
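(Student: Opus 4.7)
The inclusion $I\,\mathcal{O}_{M,x}\subseteq J$, writing $J$ for the target ideal of analytic germs whose complexification vanishes on $X_{\C}$, is immediate from the definitions. My plan for the reverse inclusion is to complexify, prove the analogous statement for complex Nash versus complex analytic germs, and then descend to the real case using the conjugation-invariance of $X_{\C}$.

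First I would pass to local coordinates, so that $M$ is a neighbourhood of $x=0$ in $\R^{d}$ and $\mathcal{N}_{M,x},\mathcal{O}_{M,x}$ are the usual real Nash and real analytic local rings at the origin. Their complexifications $\mathcal{N}^{c}$ and $\mathcal{O}^{c}$ are the rings of complex Nash and holomorphic germs at $0\in\C^{d}$; both are Noetherian local rings with common completion $\C[[u_{1},\ldots,u_{d}]]$, so the inclusion $\mathcal{N}^{c}\hookrightarrow\mathcal{O}^{c}$ is faithfully flat. Let $I^{c}\subset\mathcal{N}^{c}$ and $J^{c}\subset\mathcal{O}^{c}$ denote the respective ideals of germs vanishing on $X_{\C}$.

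The key step is the complex equality $J^{c}=I^{c}\mathcal{O}^{c}$. Choose Nash defining equations $f_{1},\ldots,f_{k}$ of $X_{\C}$; R\"uckert's Nullstellensatz in the analytic (resp.\ Nash) setting gives $J^{c}=\sqrt{(f_{1},\ldots,f_{k})\mathcal{O}^{c}}$ and $I^{c}=\sqrt{(f_{1},\ldots,f_{k})\mathcal{N}^{c}}$. Since $\mathcal{N}^{c}$ is excellent, the radical ideal $I^{c}$ extends to a radical ideal of the completion $\C[[u]]$; by faithful flatness of $\mathcal{O}^{c}\hookrightarrow\C[[u]]$ this forces the intermediate extension $I^{c}\mathcal{O}^{c}$ to be itself radical. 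Since $(f_{1},\ldots,f_{k})\mathcal{O}^{c}\subseteq I^{c}\mathcal{O}^{c}\subseteq J^{c}$ and the outer two ideals have common radical $J^{c}$, we obtain the desired equality $I^{c}\mathcal{O}^{c}=J^{c}$.

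To descend from $\C$ to $\R$, I would use that $X_{\C}$ is defined over $\R$ and hence invariant under complex conjugation. Any $F=F_{1}+iF_{2}\in\mathcal{N}^{c}$, with $F_{1},F_{2}\in\mathcal{N}_{M,x}$, vanishes on $X_{\C}$ iff both $F_{1}$ and $F_{2}$ do, which gives $I^{c}=I\mathcal{N}^{c}$ and similarly $J^{c}=J\mathcal{O}^{c}$. For $h\in J$, its complexification then lies in $J^{c}=I^{c}\mathcal{O}^{c}=I\mathcal{O}^{c}$, so one can write $h=\sum_{j}f_{j}G_{j}$ with $f_{j}\in I$ and $G_{j}\in\mathcal{O}^{c}$; taking real parts (using that $h$ and the $f_{j}$ are real) yields $h=\sum_{j}f_{j}\,\mathrm{Re}(G_{j})\in I\,\mathcal{O}_{M,x}$. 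The main obstacle is the complex equality $J^{c}=I^{c}\mathcal{O}^{c}$, i.e.\ the compatibility of the Nullstellensatz radical with the faithfully flat Nash-to-analytic extension; the other ingredients reduce to formal bookkeeping via real/imaginary decomposition.
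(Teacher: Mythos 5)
Your proof is correct and follows essentially the same route as the paper's: the non-obvious inclusion is obtained from R\"uckert's complex analytic Nullstellensatz (membership in the radical of the extended ideal) together with the fact that the extension of the radical ideal $I$ from Nash to analytic germs remains radical, via flatness and the common completion. The paper states this tersely for the real germs, citing faithful flatness of $\mathcal O_{M,x}$ over $\mathcal N_{M,x}$, whereas you carry out the complexification and the real/imaginary-part descent explicitly and justify radicality through excellence of the Nash local ring --- more detailed, but the same argument.
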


\begin{proof} The non obvious inclusion follows from the local complex analytic Ruckert's Nullstellensatz \cite{GN} and by faithfully flatness of $\mathcal O_{M,x}$ over $\mathcal N_{M,x}$. Actually, a germs at $x$ whose complexification vanishes on $X_{\mathbb C}$ belongs to the radical of $I\mathcal O_{M,x}$ by the Nullstellensatz. Moreover $I\mathcal O_{M,x}$ remains radical, by faithfully flatness, since so is $I$.
\end{proof}

\begin{proof}[Proof of Theorem \ref{thmglobalR}] We are going to translate the condition that $\Phi$ satisfies 
$$F(x,\Phi (x), \Psi \circ G(x))=0$$ 
for any $x\in M$ in such a way to be able to use nested approximation. The idea is to note that this condition is equivalent to the fact that the image of the graph of $G$ by the map 
$$({\rm Id},\Phi) \times \Psi:M\times L \to M\times N \times P$$
$$(x,y)\mapsto (x,\Phi(x),\Psi (y))$$ 
is included in the set $F^{-1}(0)$.

Assume $M, N, L$ and $P$ are respectively embedded in $\R^m, \R^n, \R^l$ and $\R^p$ as closed Nash manifolds, and let $h_N:\R^n\to \R$ and $h_P:\R^p\to \R$ be some global equations for $N$ and $P$ respectively (they exist by \cite{S1}).
Let $\Gamma \subset M\times L$ denote the graph of $G$, and let $\mathcal I$ be the sheaf of $\mathcal N_{M\times L}$-ideals of germs whose complexifications are vanishing on the complexification $\Gamma^{\mathbb C}$ of $\Gamma$. Then, we know by \cite{CRS} that there exist global generators $g_1,\ldots,g_s$ of $\mathcal I$. 

We want to approximate the analytic maps $\Phi$ and $\Psi$ keeping the property that $$(({\rm Id},\Phi) \times \Psi)(\Gamma) \subset F^{-1}(0).$$
By Lemma \ref{lem1} applied to $\Gamma \subset M\times L$, we can use Cartan Theorem B to deduce the surjectivity of the homomorphism 
$$H^0(M\times L, \mathcal O_{M\times L}^s) \to H^0(M\times L, \mathcal I  \mathcal O_{M\times L})$$ 
defined by $(\xi_1,\ldots,\xi_s)\mapsto \sum_{j=1}^s\xi_j g_j$ (note that this step is the major change with respect to what has been done in the lobal case).
In particular, considering the global cross-section 
$$F(({\rm Id},\Phi) \times \Psi)$$
of $\mathcal I \mathcal O_{M\times L}$, we obtain that there exist analytic functions $\xi_{k,1},\ldots,\xi_{k,s}$ on $M\times L$ such that
$$\forall k\in \{1,\ldots,q \},~ \forall (x,y)\in M\times L,\quad F_k(x,\Phi(x),\Psi(y))=\sum_{j=1}^s \xi_{k,j}(x,y)g_j(x,y).$$
Moreover $h_N(\Phi(x))=0$ for  $x\in M$ and $h_P(\Psi(y))=0$ for $y\in L$. Now consider the system of equations
$$\left\{\begin{array}{l}
F_1(x,U_1,\ldots,U_n,V_1,\ldots,V_p)-\sum_{j=1}^s W_{1,j}g_j(x,y)=0\\
\quad \quad \quad \quad \quad \quad \quad \quad \vdots\\
F_q(x,U_1,\ldots,U_n,V_1,\ldots,V_p)-\sum_{j=1}^s W_{q,j}g_j(x,y)=0\\
h_N(U_1,\ldots,U_n)=0\\
h_P(V_1,\ldots,V_p)=0
\end{array}\right.$$
with variables 
$$(U_i, V_r, W_{k,j})_{(i,j,k,r)\in \{1,\ldots,n\}\times \{1,\ldots,s\}\times \{1,\ldots,q\}\times \{1,\ldots,p\}}.$$
This system of equations involves Nash maps and functions, and it admits analytic solutions given by  
$$
\left\{
\begin{array}{l}
(U_1,\ldots,U_n)=\Phi(x)\\
(V_1,\ldots,V_p)=\Psi(y)\\
W_{k,j}=\xi_{k,j}(x,y)\quad {\rm for\, all}\; (j,k)\in  \{1,\ldots,s\}\times \{1,\ldots,q\}.
\end{array}
\right.
$$
By nested approximation in the Nash case (Theorem \ref{NestedGlobal} applied to $(x_1,x_2)=(y,x)$), we can approximate these analytic solutions by Nash solutions
 
 $$
\left\{
\begin{array}{l}
(U_1,\ldots,U_n)=\tilde \Phi(x,y)\\
(V_1,\ldots,V_p)=\tilde \Psi(y)\\
W_{k,j}=\tilde \xi_{k,j}(x,y),\quad {\rm for\, all}\; (j,k)\in  \{1,\ldots,s\}\times \{1,\ldots,q\}.
\end{array}
\right.
$$
In particular $\tilde \Phi$ and $\tilde \Psi$ take respectively values in $N$ and $P$, and moreover, evaluating at $y=G(x)$, we obtain for any $x\in M$ the equality
$$F(x,\tilde \Phi (x,G(x)),\tilde \Psi (G(x)))=0.$$
 We conclude by remarking that the Nash map $x\mapsto \tilde \Phi (x,G(x))$ still is an approximation of $\Phi$, and together with $\tilde \Psi$, they give the requested solution.
\end{proof}

\begin{rmk}\begin{enumerate}
\item Similarly to the local case, we deduce easily from Theorem \ref{thmglobalR} the main result in \cite{S3}, namely that Nash maps on a compact Nash manifold are Nash Right-Left equivalent if they are analytic Right-Left equivalent.
\item Theorem \ref{thmglobalR} can be refined to the case of germs of Nash maps, defined on germs of compact semialgebraic sets in Nash manifolds (cf. \cite{S3} for example).
\end{enumerate}
\end{rmk}

\subsection{Complex case}
In the complex setting, the result is very similar, providing to use the correct analogue of the global nested Artin approximation. By \cite{L}, we can approximate a holomorphic solution of a Nash algebraic equation on a compact polynomial polyhedron (namely the intersection of a finite number of sets of the form $|f|\leq 1$ for $f$ a complex polynomial; note that the ring of holomorphic functions on such a set is Noetherian by \cite{F}) by a Nash algebraic solution. Here by Nash algebraic, we follow the terminology of \cite{L} who defined a Nash algebraic map to be a homolorphic map between affine complex algebraic varieties whose components satisfy polynomial equations. 
Therefore we obtain next result by a proof completely similar to that of Theorem \ref{thmglobalR}:

\begin{thm}\label{thmglobalC} Let $M, N, L$ and $P$ be non singular affine complex algebraic varieties, and $F:M\times N \times P \to \C^q$ and $G:M \to L$ be regular maps.

Assume that there exist some holomorphic maps $\Phi: M\to N$ and $\Psi:L \to P$ such that $F(x,\Phi (x), \Psi \circ G(x))=0$ for any $x\in M$. Let $A\subset M$ and $B \subset L$ be compact polynomial polyhedra satisfying $G(A)\subset B$. Then $\Phi_{|A}$ and $\Psi_{|B}$ can be approximated by Nash algebraic maps 
$\tilde \Phi :A\to N$ and $\tilde \Psi:B\to P$ such that $F(x,\tilde \Phi (x),\tilde \Psi \circ G(x))=0$ for $x\in A$.
\end{thm}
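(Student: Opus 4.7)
The proof parallels that of Theorem \ref{thmglobalR}, with two substitutions dictated by the complex algebraic context: Lempert's theorem \cite{L} replaces the global nested Nash approximation Theorem \ref{NestedGlobal}, and the Noetherian property of the ring of holomorphic functions on a compact polynomial polyhedron (\cite{F}) replaces the use of Cartan's Theorem B. The plan is to rewrite $F(x,\Phi(x),\Psi\circ G(x))=0$ as the vanishing of a pulled-back function on the graph of $G$, express this vanishing as a linear combination of defining equations of the graph, and then apply nested approximation on $A\times B$.

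First, I would embed $N\subset\C^n$, $P\subset\C^p$, and $L\subset\C^l$ as closed non-singular affine algebraic subvarieties, with regular defining functions $h_N$ and $h_P$ of $N$ and $P$ in their ambient spaces. The equation $F(x,\Phi(x),\Psi\circ G(x))=0$ is equivalent to the vanishing of $(x,y)\mapsto F(x,\Phi(x),\Psi(y))$ on the graph $\Gamma$ of $G$ in $M\times L$, and $\Gamma$ is cut out by the regular functions $g_j(x,y):=y_j-G_j(x)$, $j=1,\ldots,l$, relative to the embedding of $L$.

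Next, I would restrict to the compact polynomial polyhedron $A\times B\subset M\times L$: by the hypothesis $G(A)\subset B$, the relevant portion of $\Gamma$ lies in $A\times B$. Since $\mathcal O(A\times B)$ is Noetherian and $F_k(x,\Phi(x),\Psi(y))$ vanishes on $\Gamma\cap(A\times B)$, one obtains a decomposition
$$F_k(x,\Phi(x),\Psi(y))=\sum_{j=1}^l \xi_{k,j}(x,y)\,g_j(x,y)$$
for holomorphic $\xi_{k,j}$ on a neighborhood of $A\times B$. At the level of local germs this is Taylor expansion about $y=G(x)$; the global passage is the step in which \cite{F} plays the role of Cartan's Theorem B from the real case.

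Finally, I would consider the Nash algebraic system in unknowns $(U_i,V_r,W_{k,j})$ given by
$$F_k(x,U,V)-\sum_{j=1}^l W_{k,j}\,g_j(x,y)=0,\quad h_N(U)=0,\quad h_P(V)=0,$$
which admits the nested holomorphic solution $U=\Phi(x)$, $V=\Psi(y)$, $W_{k,j}=\xi_{k,j}(x,y)$ on $A\times B$, with $V$ depending only on $y$. Applying the nested version of Lempert's approximation with variable order $(y,x)$ produces a close Nash algebraic solution $U=\tilde\Phi(x,y)$, $V=\tilde\Psi(y)$, $W_{k,j}=\tilde\xi_{k,j}(x,y)$; the equations $h_N(\tilde\Phi)=0$ and $h_P(\tilde\Psi)=0$ force $\tilde\Phi$ and $\tilde\Psi$ to take values in $N$ and $P$. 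Specializing at $y=G(x)$ gives $F(x,\tilde\Phi(x,G(x)),\tilde\Psi(G(x)))=0$ on $A$, and $x\mapsto\tilde\Phi(x,G(x))$ together with $\tilde\Psi$ provide the desired Nash algebraic approximations of $\Phi_{|A}$ and $\Psi_{|B}$. The main obstacle is securing the nested form of Lempert's approximation on compact polynomial polyhedra, the complex counterpart of Theorem \ref{NestedGlobal}; this is precisely what \cite{L} supplies in this setting.
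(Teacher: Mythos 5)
Your proposal is correct and takes essentially the same route as the paper, which itself only observes that Theorem \ref{thmglobalC} follows ``by a proof completely similar'' to that of Theorem \ref{thmglobalR}: rewrite the hypothesis as vanishing on the graph of $G$, decompose $F_k(x,\Phi(x),\Psi(y))$ in terms of the defining functions $y_j-G_j(x)$ of the graph, and apply the nested (Lempert-type) approximation with variable order $(y,x)$ before specializing at $y=G(x)$. The only cosmetic deviation is bookkeeping: the global decomposition is most naturally obtained, as in the real case, from Cartan's Theorem B on the Stein variety $M\times L$ (the $g_j$ generate the ideal sheaf of the graph), while Frisch's Noetherianity of $\mathcal O(A\times B)$ is what makes Lempert's approximation applicable on the polyhedron rather than a substitute for Theorem B, but this does not affect the validity of your argument.
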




\enddocument